\begin{document}

% DO NOT REMOVE: Creates space for Elsevier logo, ScienceDirect logo
% and ENDM logo
\begin{verbatim}\end{verbatim}\vspace{2.5cm}

\begin{frontmatter}

\title{A graph-theoretical axiomatization of oriented matroids}

\author{Kolja Knauer\thanksref{myemail}}
\address{Institut f\"ur Mathematik\\ Technische Universit\"at Berlin\\ Berlin, Germany}
\author{Juan Jos\'e Montellano-Ballesteros\thanksref{coemail}}
\author{, Ricardo Strausz\thanksref{cocoemail}}
\address{Instituto de Matem\'aticas\\ Universidad Nacional Aut\'onoma de M\'exico\\ M\'exico D.F., M\'exico} 
\thanks[myemail]{Email:\href{mailto:knauer@math.tu-berlin.de} {\texttt{\normalshape knauer@math.tu-berlin.de}}}
\thanks[coemail]{Email:\href{mailto:juancho@matem.unam.mx} {\texttt{\normalshape juancho@matem.unam.mx}}}
\thanks[cocoemail]{Email:\href{mailto:strausz@math.unam.mx} {\texttt{\normalshape strausz@math.unam.mx}}}

\begin{abstract}
We characterize which systems of sign vectors are the cocircuits 
of an oriented matroid in terms of the cocircuit graph.
\end{abstract}

\begin{keyword}
Oriented matroids, big face lattice, cocircuit graph
\end{keyword}

\end{frontmatter}

\section{Introduction}
The cocircuit graph is a natural combinatorial object associated with an oriented
matroid. In the case of spherical pseudoline-arrangements, i.e., rank 3 oriented
matroids, its vertices are the intersection points of the lines and two points
share an edge if they are adjacent on a line. More generally, the Topological
Representation Theorem of Folkman and Lawrence~\cite{Fol-78} says that every oriented
matroid can be represented as an arrangement of pseudospheres. The cocircuit
graph is the 1-skeleton of this arrangement.
Cordovil, Fukuda and Guedes de Oliveira~\cite{Cor-93} show that a a cocircuit graph
does not uniquely determine an oriented matroid. But Babson, Finschi and Fukuda~\cite{Bab-01}
show that \emph{uniform} oriented matroids are determined up to isomorphism by their cocircuit graph.
Moreover they provide a polynomial time recognition algorithm
for cocircuit graphs of uniform oriented matroids.
In~\cite{Mon-06}, Montellano-Ballesteros and Strausz give a characterization of uniform
oriented matroids in view of sign labeled cocircuit graphs. This characterization is 
strengthened by Felsner, G\'omez, Knauer, Montellano-Ballesteros and Strausz~\cite{Fel-11} and used 
to improve the recognition algorithm of~\cite{Bab-01}.

In this paper we present a generalization and strengthening of the characterization of sign labeled 
cocircuit graphs of uniform oriented matroids of~\cite{Fel-11} to general oriented matroids. After introducing the 
necessary preliminaries in the next section, we prove the main theorem in the last section.

\section{Preliminaries}

Here we will only introduce the terminology necessary for proving our result, for a more general introduction, see~\cite{Bjo-99}. A \emph{signed set} $X$ on a ground set $E$ is pair $X=(X^+,X^-)$ of disjoint subsets of $E$. For $e\in E$ we write $X(e)=+$ and $X(e)=-$
if $e\in X^+$ and $e\in X^-$, respectively, and $X(e)=0$, otherwise. The \emph{support}
$\underline{X}$ of a signed set $X$ is the set $X^+\cup X^-$. The \emph{zero-support} of $X$ is $X^0:=E\backslash\underline{X}$. By $-X$ we denote the signed set $(X^-,X^+)$. Given
signed sets $X,Y$ their \emph{separator} is defined as $S(X,Y):=(X^+\cap Y^-)\cup(X^-\cap Y^+)$.

\begin{definition}
A pair $\mathcal{M}=(E, \mathcal{C}^*)$ is called \emph{oriented matroid} with \emph{cocircuits} $\mathcal{C}^*$ if $\mathcal{C}^*$ is a system of signed sets with ground set $E$, satisfying the following axioms: 
\begin{list}{\labelitemi}{\leftmargin=2.5em}
 \item[(C0)] $\emptyset\notin\mathcal{C}^*$
 \item[(C1)] $\mathcal{C}^*=-\mathcal{C}^*$
 \item[(C2)] if $X,Y\in\mathcal{C}^*$ and $\underline{X}\subseteq\underline{Y}$ then $X=\pm Y$
 \item[(C3)] for all $X,Y\in\mathcal{C}^*$ with $X\neq\pm Y$ and $e\in S(X,Y)$ exists $Z\in\mathcal{C}^*$ with $Z(e)=0$, $Z^+\subseteq X^+\cup Y^+$ and $Z^-\subseteq X^-\cup Y^-$
\end{list}
\end{definition}

The \emph{composition} of signed sets $X,Y$ is the signed set $X\circ Y:=(X^+\cup(Y^+\backslash X^-),X^-\cup(Y^-\backslash X^+)).$
Given a system $\mathcal{C}^*$ of signed sets we denote by $\mathcal{L}(\mathcal{C}^*):=\{X_1\circ\ldots\circ X_k\mid X_1, \ldots, X_k\in \mathcal{C}^*\}$ the set of all (finite) compositions of $\mathcal{C}^*$. Note that the empty set is considered as the empty composition of cocircuits, and so $\emptyset\in \mathcal{L}(\mathcal{C}^*)$. If $\mathcal{C}^*$ are the cocircuits of an oriented matroid $\mathcal{M}$, then the elements of $\mathcal{L}(\mathcal{C}^*)$ are called the \emph{covectors} of $\mathcal{M}$. One can endow $\mathcal{L}(\mathcal{C}^*)$ with a partial order relation where $Y\leq X$ if and only if $S(X,Y)=\emptyset$ and $\underline{Y}\subseteq \underline{X}$. Adding a global maximum $\hat{\mathcal{L}}:=\mathcal{L}(\mathcal{C}^*)\cup\hat{1}$ it is easy to see that one obtains a lattice $\mathcal{F}_{{\rm big}}(\mathcal{L}):=(\hat{\mathcal{L}},\leq)$. If $\mathcal{C}^*$ is the set of cocircuits of an oriented matroid, then $\mathcal{F}_{{\rm big}}(\mathcal{L})$ is graded lattice with rank function $h$. In this case $\mathcal{F}_{{\rm big}}(\mathcal{L})$ is called \emph{the big face lattice} of $\mathcal{M}$. The \emph{rank} $r(\mathcal{M})$ of $\mathcal{M}$ is $h(\hat{1})-1$, i.e., one less than the rank of $\mathcal{F}_{{\rm big}}(\mathcal{L})$.

There are two important undirected graphs associated to $\mathcal{F}_{{\rm big}}(\mathcal{L})$ -- one on its atoms and one on its coatoms. So the first is a graph on $\mathcal{C}^*$. In the case of $\mathcal{C}^*$ being the cocircuits
of an oriented matroid $\mathcal{M}$ it is called the \emph{cocircuit graph} of $\mathcal{M}$. Define $G(\mathcal{C}^*)$ on $\mathcal{C}^*$ such that two signed sets $X,Y\in\mathcal{C}^*$ are connected by an edge if and only if there is $Z\in\hat{\mathcal{L}}$ such that $X,Y$ are the only elements of $\mathcal{C}^*$ with $X,Y\leq Z$. 

The other graph induced by $\mathcal{F}_{{\rm big}}(\mathcal{L})$ is defined on the set $\mathcal{T}$ of coatoms of $\mathcal{F}_{{\rm big}}(\mathcal{L})$ the poset. Elements of $\mathcal{T}$ are called \emph{topes}. Topes $S,T\in\mathcal{T}$ are contained in an edge 
if and only if there is $Z\in\hat{\mathcal{L}}$ such that $S,T$ are the only elements of $\mathcal{T}$ with $X,Y\geq Z$.
This graph called the \emph{tope graph} is denoted by $G(\mathcal{T})$.

If $G$ is a graph on a system $\mathcal{S}$ of signed sets with ground set $E$. For $X_1,\ldots, X_k\in \mathcal{S}$ we denote by $[X_1,\ldots, X_k]$ the subgraph of $G$ induced by $\{Z\in \mathcal{S}\mid Z(e)\in\{0,X_1(e),\ldots, X_k(e)\}\textmd{ for all }e\in E\}$. We call $[X_1,\ldots, X_k]$ the \emph{crabbed hull} of $X_1,\ldots, X_k$. An $(X,Y)$-path in $G$ is called \emph{crabbed} if it is contained in $[X,Y]$.

One important oriented matroid operation is the \emph{contraction}. Let $A\subseteq E$, then $\mathcal{M}/A$ is an oriented matroid
on the ground set $E\backslash A$ with $\mathcal{C}^*/A:=\{X\backslash A\mid X\in\mathcal{C}^*\textmd{ and }A\subseteq X^0\}$. The set $\mathcal{L}(\mathcal{C}^*/A)$ is easily seen to be $\{X\backslash A\mid X\in\mathcal{L}(\mathcal{C}^*)\textmd{ and }A\subseteq X^0\}$. It is easy to see that for $U\in\mathcal{L}(\mathcal{C}^*)$ we have $r(\mathcal{M}/U^0)=h(U)$, where $h(U)$ is the rank of $U$ in $\mathcal{F}_{{\rm big}}(\mathcal{L})$.

\section{Result}

In order to prove Theorem~\ref{thm:main} we need two lemmas. The first one is about tope graphs of oriented matroids.
Tope graphs of oriented matroids are a special class of \emph{antipodal partial cubes}~\cite{Fuk-93}. We will make
use of a particular consequence of this:
\begin{lemma}[\cite{Cor-82}]\label{lem:tope1}
 Let $\mathcal{M}$ be an oriented matroid with topes $U,V\in \mathcal{T}$. For all $U,V\in\mathcal{T}$ there is a crabbed $(U,V)$-path in 
$G(\mathcal{T})$.
\end{lemma}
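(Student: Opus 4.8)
The plan is to induct on $|S(U,V)|$. If $S(U,V)=\emptyset$ then $U=V$ (topes have full support) and the one-vertex path suffices; so assume $U\neq V$. The inductive step reduces to the claim that $U$ has a neighbour $W$ in $G(\mathcal{T})$ with $W\in[U,V]$ and $W\neq U$. Indeed, such a $W$ is a tope that agrees with $U$ off $S(U,V)$ but differs from $U$ somewhere on it, so $S(W,V)\subsetneq S(U,V)$; moreover $[W,V]\subseteq[U,V]$, since for an element $e$ with $W(e)=V(e)$ we have $\{0,W(e),V(e)\}=\{0,V(e)\}\subseteq\{0,U(e),V(e)\}$, while for $W(e)=U(e)$ this is clear. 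Hence by the induction hypothesis there is a crabbed $(W,V)$-path; it lies in $[W,V]\subseteq[U,V]$, and prepending the edge $UW$ gives a crabbed $(U,V)$-walk, hence (deleting cycles) a crabbed $(U,V)$-path.

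For the claim I use the big face lattice. As $\mathcal{F}_{{\rm big}}(\mathcal{L})$ is graded of rank $r(\mathcal{M})+1$ and each interval $[Z,\hat{1}]$ is order-isomorphic to the big face lattice of an oriented matroid of rank $r(\mathcal{M})-h(Z)$, a covector $Z$ has exactly two topes above it iff $h(Z)=r(\mathcal{M})-1$. Since $S,T\in\mathcal{T}$ are joined in $G(\mathcal{T})$ exactly when $S\wedge T$ has precisely two topes above it, the neighbours of $U$ in $G(\mathcal{T})$ are exactly the topes $W\neq U$ lying above a lower cover $P$ of $U$ in $\mathcal{F}_{{\rm big}}(\mathcal{L})$, and then $S(U,W)\subseteq P^{0}$ because $U,W\geq P$. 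Hence it suffices to find a lower cover $P$ of $U$ with $P^{0}\subseteq S(U,V)$. For this, pick $e\in S(U,V)$ and apply covector elimination (a standard consequence of (C0)--(C3)) to $U$ and $V$ at $e$: one obtains a covector $Z$ with $Z(e)=0$ and $Z(f)=(U\circ V)(f)=U(f)\neq 0$ for all $f\notin S(U,V)$. If in addition $Z\leq U$, then $\emptyset\neq Z^{0}\subseteq S(U,V)$, and any covector $P$ maximal with $Z\leq P<U$ is, by gradedness, a lower cover of $U$ with $P^{0}\subseteq Z^{0}\subseteq S(U,V)$.

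The routine parts are the inclusion $[W,V]\subseteq[U,V]$ and the description of the edges of $G(\mathcal{T})$ in terms of $\mathcal{F}_{{\rm big}}(\mathcal{L})$. The crux, and the step I expect to be the main obstacle, is securing $Z\leq U$: a single covector elimination determines $Z$ only at $e$ and off $S(U,V)$, so $Z$ may disagree with $U$ on $S(U,V)\setminus\{e\}$. One must control those signs, e.g.\ by iterating eliminations, by choosing $Z$ with inclusion-maximal support among covectors vanishing at $e$ and equal to $U$ off $S(U,V)$, or by first contracting $\underline{U\wedge V}$ to reduce to the case where no nonzero covector lies below both $U$ and $V$.
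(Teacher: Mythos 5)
The paper offers no proof of this lemma; it is quoted from Cordovil \cite{Cor-82} (it is essentially the fact that in the tope graph the separation distance is realized by paths, cf.\ Proposition 4.2.3 of \cite{Bjo-99}), so your argument has to stand on its own. Its skeleton is fine: the induction on $|S(U,V)|$, the inclusion $[W,V]\subseteq[U,V]$, and the identification of the neighbours of $U$ in $G(\mathcal{T})$ with the second topes over the lower covers $P$ of $U$ in $\mathcal{F}_{{\rm big}}(\mathcal{L})$ (whence $S(U,W)\subseteq P^0$) are all correct and routine. But the entire content of the lemma sits in the one claim you leave open: that $U$ has a proper face whose zero set meets $\underline{U}$ only inside $S(U,V)$. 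As you say yourself, a single covector elimination at $e\in S(U,V)$ pins $Z$ down only at $e$ and off $S(U,V)$, and none of your three proposed repairs is carried out, so as written this is a genuine gap --- and it is exactly the nontrivial step. Your second suggestion (fixing $Z(e)=0$ and maximizing the support) in fact runs into trouble: eliminating a disagreement $g\in S(Z,U)$ between $Z$ and $U$ produces a covector whose value at $e$ is $(Z\circ U)(e)=U(e)\neq 0$, so you leave the class you are maximizing over.

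Your first suggestion, iterating eliminations, does close the gap and is short. Let $Z_0$ be obtained by eliminating $e$ between $U$ and $V$; then $Z_0$ agrees with $U$ off $S(U,V)$ and $S(Z_0,U)\subseteq S(U,V)\setminus\{e\}$. While $S(Z_i,U)\neq\emptyset$, pick $g_i\in S(Z_i,U)$ and eliminate it between $Z_i$ and $U$. Since $(Z_i\circ U)(f)=U(f)$ for every $f\notin S(Z_i,U)$ (off the loops a tope is nonzero, and at a loop every covector vanishes), the new covector $Z_{i+1}$ satisfies $Z_{i+1}(g_i)=0$, agrees with $U$ outside $S(Z_i,U)\subseteq S(U,V)$, and has $S(Z_{i+1},U)\subseteq S(Z_i,U)\setminus\{g_i\}$; so the process terminates. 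The final $Z$ has $S(Z,U)=\emptyset$ and $\underline{Z}\subseteq\underline{U}$, hence $Z\leq U$; it is a proper face because it vanishes at $e$ or at the last eliminated $g_i$, both of which lie in $\underline{U}$; and $Z^0\cap\underline{U}\subseteq S(U,V)$ because $Z$ agrees with $U$ off $S(U,V)$. Any lower cover $P$ of $U$ with $Z\leq P$ (which exists by gradedness) then has $P^0\cap\underline{U}\subseteq Z^0\cap\underline{U}\subseteq S(U,V)$, which is exactly what your induction needs. With this inserted, your proof is complete.
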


The second lemma establishes a connection between tope graph and cocircuit graph. As an application of a theorem of Barnette~\cite{Bar-73} Cordovil and Fukuda prove:
\begin{lemma}[\cite{Cor-93}]\label{lem:tope2}
Let $\mathcal{M}$ be an oriented matroid of rank $r$ and $U\in \mathcal{T}$ a tope of $\mathcal{M}$. The graph $G(U)$ induced by $\{X\in \mathcal{C}^*\mid X\circ U=U\}$ in $G(\mathcal{C}^*)$ is $(r-1)$-connected.
\end{lemma}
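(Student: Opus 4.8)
The plan is to identify $G(U)$ with the $1$-skeleton of the cell complex carried by the tope $U$, and then to invoke Barnette's connectivity theorem for that complex. First I would rewrite the vertex set: for a cocircuit $X$ the condition $X\circ U=U$ is equivalent to $X\leq U$ in $\mathcal{F}_{{\rm big}}(\mathcal{L})$. Indeed, a cocircuit vanishes on every loop of $\mathcal{M}$ and every loop lies in $U^0$, so $\underline{X}\subseteq\underline{U}$ is automatic; then $X\circ U=U$ is the same as $S(X,U)=\emptyset$. Thus the vertices of $G(U)$ are exactly the atoms of the interval $I:=[\emptyset,U]$ of the big face lattice.

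Next I would check that the edges match, i.e.\ that $G(U)$ is the atom graph of the graded poset $I$. If $\{X,Y\}$ is an edge of $G(\mathcal{C}^*)$ with both endpoints in $G(U)$, witnessed by $Z\in\hat{\mathcal{L}}$ having $X,Y$ as its only cocircuits below, then (excluding the trivial low-rank cases) $X\neq\pm Y$ and $Z=X\circ Y$, the unique minimal covector having both $X$ and $Y$ below it; since $Z$ agrees with $X$ on $\underline{X}$ and with $Y$ on $\underline{Y}\setminus\underline{X}$, and both agree with $U$, we get $Z\leq U$, while $h(Z)=2$ because $[\emptyset,Z]$ is the face lattice of a $1$-ball. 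Conversely, a rank-$2$ element $Z$ of $I$ covers exactly two atoms $X,Y$, which are the only cocircuits of $\mathcal{M}$ below $Z$; hence $\{X,Y\}$ is an edge of $G(\mathcal{C}^*)$, and since $X,Y\leq U$ it is an edge of the induced subgraph $G(U)$.

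Now I would bring in the geometry. By the Topological Representation Theorem the proper part of $\mathcal{F}_{{\rm big}}(\mathcal{L})$ is the face poset of a regular PL cell decomposition of $S^{r-1}$; accordingly $I=[\emptyset,U]$ is the face lattice of a shellable regular CW-ball $B$ of dimension $r-1$ (equivalently, of the full-support tope cell of the loopless oriented matroid $\mathcal{M}/U^0$, which has rank $h(U)=r$). By the two previous paragraphs, $G(U)$ is the $1$-skeleton of $B$, which coincides with the $1$-skeleton of its boundary $(r-2)$-sphere. Barnette's theorem~\cite{Bar-73}, applied to this ball, then yields that its graph is $(r-1)$-connected, which is the assertion. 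The cases $r\leq 2$ are trivial and are treated directly.

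The oriented-matroid bookkeeping above is routine; the delicate point is the last step, since Barnette's theorem has to be used in a form valid for regular, and in general highly non-simplicial, cell complexes --- so one must either quote the cell-complex version of the theorem or supply the reduction from shellable balls to triangulated ones without subdividing, as a subdivision would destroy the identification of graphs. As a self-contained alternative, one could instead prove $(r-1)$-connectivity of $G(U)$ directly by induction on $r$: after deleting a cocircuit from $G(U)$, use that contracting at a suitable element produces oriented matroids of smaller rank whose cocircuit graphs are, inductively, sufficiently connected, following the pattern of Balinski's proof for graphs of polytopes.
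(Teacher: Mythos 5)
The paper does not actually prove this lemma; it imports it from Cordovil--Fukuda~\cite{Cor-93}, noting only that it is obtained as an application of Barnette's theorem~\cite{Bar-73}. Your reconstruction --- identifying $G(U)$ with the $1$-skeleton of the $(r-1)$-ball carried by the tope $U$ via the Topological Representation Theorem and then invoking Barnette's connectivity theorem --- is precisely that route, and the argument (including your own caveat about the non-simplicial cell-complex version of Barnette) is correct.
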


Together this enables us to prove a graph-theoretical axiomatization of oriented matroids:
\begin{theorem}\label{thm:main}
Let $\mathcal{C}^*$ be a set of sign vectors satisfying (C0)--(C2) then the following are equivalent

\begin{list}{\labelitemi}{\leftmargin=2.5em}
 \item[(i)] $\mathcal{C}^*$ is the set of cocircuits of an oriented matroid $\mathcal{M}$,
\item[(ii)] the crabbed hull $[X_1,\ldots, X_k]$ of any $X_1,\ldots, X_k\in\mathcal{C}^*$ is an induced subgraph of connectivity $h(X_1\circ\ldots\circ X_k)-1$ of $G(\mathcal{C}^*)$,
\item[(iii)] for all $X, Y\in\mathcal{C}^*$ with $X\neq\pm Y$ there is a crabbed $(X,Y)$-path in $G(\mathcal{C}^*)$.
\end{list}
\end{theorem}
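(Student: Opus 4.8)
The plan is to prove the implications $(i)\Rightarrow(ii)\Rightarrow(iii)\Rightarrow(i)$.

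For $(i)\Rightarrow(ii)$, assume $\mathcal{C}^*$ is the cocircuit set of an oriented matroid $\mathcal{M}$, fix $X_1,\dots,X_k\in\mathcal{C}^*$, and set $U:=X_1\circ\dots\circ X_k\in\mathcal{L}(\mathcal{C}^*)$. The first observation is that $[X_1,\dots,X_k]$ consists exactly of those cocircuits $X$ with $X\circ U=U$, i.e.\ those $X\le U$ in $\mathcal{F}_{\rm big}(\mathcal{L})$; indeed, $X(e)\in\{0,X_1(e),\dots,X_k(e)\}$ for all $e$ means precisely that $X$ agrees with $U$ wherever it is nonzero and that $\underline X\subseteq\underline U$. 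Now pass to the contraction $\mathcal{M}/U^0$: its cocircuits are exactly the $X\setminus U^0$ for $X\in\mathcal{C}^*$ with $X\le U$, this correspondence is a graph isomorphism from $[X_1,\dots,X_k]$ onto the cocircuit graph $G(\mathcal{C}^*/U^0)$, and $U\setminus U^0$ is a tope of $\mathcal{M}/U^0$. Since $r(\mathcal{M}/U^0)=h(U)$, Lemma~\ref{lem:tope2} applied to this tope gives that $G((\mathcal{C}^*/U^0))$, hence $[X_1,\dots,X_k]$, is $(h(U)-1)$-connected. That it is an induced subgraph of $G(\mathcal{C}^*)$ needs a short argument: if $X,Y\in[X_1,\dots,X_k]$ are joined in $G(\mathcal{C}^*)$ by some $Z\in\hat{\mathcal{L}}$, then $Z\le U$ because $X\circ Y\le U$, and the same $Z$ witnesses the edge inside the contraction. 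The implication $(ii)\Rightarrow(iii)$ is immediate: apply $(ii)$ to the pair $X,Y$; since $X\neq\pm Y$ we have $h(X\circ Y)\ge 2$, so $[X,Y]$ is connected, and a path between $X$ and $Y$ in it is by definition a crabbed $(X,Y)$-path.

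The substance is $(iii)\Rightarrow(i)$. Here $\mathcal{C}^*$ already satisfies (C0)--(C2), so we only need to verify the cocircuit elimination axiom (C3). Fix $X,Y\in\mathcal{C}^*$ with $X\neq\pm Y$ and $e\in S(X,Y)$; we must produce $Z\in\mathcal{C}^*$ with $Z(e)=0$, $Z^+\subseteq X^+\cup Y^+$, $Z^-\subseteq X^-\cup Y^-$. By $(iii)$ there is a crabbed $(X,Y)$-path $X=W_0,W_1,\dots,W_m=Y$, all lying in $[X,Y]$, i.e.\ $W_i(f)\in\{0,X(f),Y(f)\}$ for every $f$. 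Since $e\in S(X,Y)$ we have $X(e)=-Y(e)\neq 0$; so $W_0(e)=X(e)$ while $W_m(e)=Y(e)=-X(e)$, and these are opposite nonzero signs. Walking along the path, there must be a first index $i$ where $W_i(e)\neq X(e)$; because $W_i(e)\in\{0,X(e),Y(e)\}=\{0,X(e),-X(e)\}$, either $W_i(e)=0$, in which case $Z:=W_i$ works since $W_i\in[X,Y]$ forces $W_i^+\subseteq X^+\cup Y^+$ and $W_i^-\subseteq X^-\cup Y^-$; or $W_i(e)=-X(e)$, so $W_{i-1}(e)=X(e)$ and $W_i(e)=-X(e)=Y(e)$ are opposite nonzero signs, i.e.\ $e\in S(W_{i-1},W_i)$, and $W_{i-1},W_i$ are adjacent in $G(\mathcal{C}^*)$.

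To finish this last case I would use the edge $W_{i-1}W_i$ of $G(\mathcal{C}^*)$ together with (C2): by definition of the cocircuit graph there is $Z'\in\hat{\mathcal{L}}$ with $W_{i-1},W_i$ the only cocircuits below $Z'$, and one argues that $Z'$ is a covector of rank $2$ with $\underline{Z'}=\underline{W_{i-1}}\cup\underline{W_i}$ and $Z'$ compatible with both $W_{i-1}$ and $W_i$ — but $S(W_{i-1},W_i)\neq\emptyset$ makes this impossible unless we are more careful. The clean route is rather to note that $W_{i-1},W_i\in[X,Y]$ with $e\in S(W_{i-1},W_i)$ lets us recurse: $W_{i-1}\neq\pm W_i$ (opposite signs at $e$ but both in the ``quadrant'' of $(X,Y)$, so they cannot be negatives of each other unless $X=\pm Y$), so $(iii)$ again yields a crabbed $(W_{i-1},W_i)$-path, which lies in $[W_{i-1},W_i]\subseteq[X,Y]$; this strictly shortens something (e.g.\ the number of coordinates on which the endpoints differ from $X$, or one runs an induction on $|\underline X\cup\underline Y|$) and drives us back to the ``$W_i(e)=0$'' case, producing the required $Z$. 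The main obstacle is exactly making this descent rigorous — choosing the right induction parameter so that the recursion on crabbed subpaths terminates and always lands on a cocircuit vanishing at $e$ inside the correct quadrant; once the parameter is pinned down, conformality of the quadrant $[X,Y]$ under taking crabbed hulls of its members does the rest.
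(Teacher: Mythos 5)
Your step (ii)$\Rightarrow$(iii) matches the paper, but the other two implications have genuine gaps. In (i)$\Rightarrow$(ii) the opening identification is false: with $U:=X_1\circ\ldots\circ X_k$, the crabbed hull $[X_1,\ldots,X_k]$ is \emph{not} the set of cocircuits $X$ with $X\circ U=U$. For $e\in S(X_i,X_j)$ the crabbed hull admits $Z(e)$ of either sign, whereas $Z\leq U$ forces $Z(e)\in\{0,U(e)\}$; already for $k=2$ with $S(X_1,X_2)\neq\emptyset$ the cocircuit $X_2$ lies in $[X_1,X_2]$ but $X_2\not\leq X_1\circ X_2$. Consequently, after contracting $U^0$, Lemma~\ref{lem:tope2} applied to the single tope $U'$ only yields $(h(U)-1)$-connectivity of the subgraph $G(U')$, which is in general a proper subgraph of the crabbed hull, so the claimed connectivity of $[X_1,\ldots,X_k]$ does not follow. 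The paper has to work harder here: it covers $[X'_1,\ldots,X'_k]$ by the graphs $G(T_i)$ for topes $T_i$ along a crabbed tope path (Lemma~\ref{lem:tope1}), checks that consecutive $G(T_i)$, $G(T_{i+1})$ meet in at least $r'-1$ vertices, and glues via Menger's theorem. The fact that your argument never uses Lemma~\ref{lem:tope1} is the symptom of this missing piece.

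In (iii)$\Rightarrow$(i) you reach the right configuration but do not close it, and you say so yourself. The case $W_{i-1}(e)=X(e)$, $W_i(e)=-X(e)$ should not trigger a recursion; it is simply impossible, and the tension you notice (``$S(W_{i-1},W_i)\neq\emptyset$ makes this impossible'') is exactly the contradiction you need rather than an obstacle. Adjacent vertices of $G(\mathcal{C}^*)$ have empty separator: an edge is witnessed by some $Z'\in\hat{\mathcal{L}}$ with $W_{i-1},W_i\leq Z'$, and $W\leq Z'$ forces $W(f)=Z'(f)$ on $\underline{W}$, so $W_{i-1}$ and $W_i$ cannot carry opposite nonzero signs at $e$ (the degenerate witness $Z'=\hat{1}$ would force $\mathcal{C}^*=\{W_{i-1},W_i\}$ and, by (C2), $X=\pm Y$). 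Hence along any crabbed $(X,Y)$-path the value at $e$ never flips sign across an edge, and since it starts at $X(e)$ and ends at $-X(e)$ some vertex $Z$ of the path has $Z(e)=0$; crabbedness then gives $Z^+\subseteq X^+\cup Y^+$ and $Z^-\subseteq X^-\cup Y^-$, which is (C3). The descent argument with an unspecified induction parameter is not needed and, as written, is not a proof.
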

\begin{proof}
(i)$\Longrightarrow$ (ii): Let $U:=X_1\circ\ldots\circ X_k$ be a covector of rank $r':=h(X_1\circ\ldots\circ X_k)$ and $X,Y$ cocircuits in $[X_1,\ldots, X_k]$. Contract $U^0$ obtaining $\mathcal{M}':=\mathcal{M}/U^0$ of rank $r'$ and cocircuits $X',Y'$. The contraction does not affect the crabbed hull we are considering, i.e. $[X_1,\ldots, X_k]\cong [X'_1,\ldots, X'_k]$. Now $U'$ is a tope of $\mathcal{M}'$ and so are $V':=X'\circ U'$ and $W':=Y'\circ U'$. By Lemma~\ref{lem:tope1} there is a crabbed $(V',U')$-path $P=(V'=T_1,\ldots, T_k=U')$ in $G(\mathcal{T}')$. The graphs $G(T_i)$ are all contained in $[X'_1,\ldots, X'_k]$ and $(r-1)$-connected by Lemma~\ref{lem:tope2}. Consecutive $G(T_i)$ and $G(T_{i+1})$ intersect in at least $r'-1$ vertices, because their intersection is a tope of a one-element-contraction minor of $\mathcal{M}'$. Together Menger's theorem (see e.g.~\cite{Die-10}) yields that the graph $G(T_1)\cup\ldots\cup G(T_k)$ is $(r'-1)$-connected. In particular there are $(r'-1)$ internally disjoint paths connecting $X'$ and $Y'$ in $[X'_1,\ldots, X'_k]$ and thus the analogue holds for $X$ and $Y$ in $[X_1,\ldots, X_k]$. Hence 
$[X_1,\ldots, X_k]$ is $(h(X_1\circ\ldots\circ X_k)-1)$-connected.

(ii)$\Longrightarrow$ (iii): If $X\neq\pm Y$ then $(h(X\circ Y)-1)>0$. Hence $[X,Y]$ is connected and there is a crabbed $(X,Y)$-path in $G(\mathcal{C}^*)$.

(iii)$\Longrightarrow$ (i): We have to show that (C3) holds for $\mathcal{C}^*$. Let $X, Y\in\mathcal{C}^*$ with $X\neq\pm Y$
and $e\in S(X,Y)$. Let $P$ be a crabbed $(X,Y)$-path. Since adjacent cocircuits have empty separator, there must be $Z\in P$ with
$Z(e)=0$. Since $P$ is crabbed $Z$ also satisfies $Z^+\subseteq X^+\cup Y^+$ and $Z^-\subseteq X^-\cup Y^-$.
\end{proof}

It shall be mentioned that the ``(i)$\Longrightarrow$ (ii)''-part of the proof is only a slight generalization of a result in~\cite{Cor-93}. But there the characterizing quality of (ii) was not noted. Furthermore we remark that the connectivity in (ii) is best-possible, since in uniform oriented matroids $X_i$ has exactly $h(X_1\circ\ldots\circ X_k)-1$ neighbors in $[X_1,\ldots, X_k]$.

Even if the cocircuit graph does not uniquely determine the oriented matroid, Theorem~\ref{thm:main} might lead to an effective recognition algorithm for cocircuit graphs of general oriented matroids, as its uniform specialization did in~\cite{Fel-11}. In particular, if one is given $G(\mathcal{C}^*)$  with edge set $\mathcal{E}$ it is possible to check (iii) in $\mathcal{O}(|\mathcal{C}^*||\mathcal{E}|)$, see part 5.C. of the algorithm in~\cite{Fel-11}. In contrast the naive algorithm to
check (C3) takes $\mathcal{O}(|\mathcal{C}^*|^3)$. So we have an advantage for sparse cocircuit graphs, e.g., cocircuit graphs of uniform oriented matroids.

Another goal would surely be to characterize cocircuit graphs in purely graph-theoretic terms, i.e., excluding any information about signed sets at all.

\end{document}